\documentclass[12pt,twoside]{article}
\usepackage{amssymb,amsmath,amsthm,latexsym}
\usepackage{amsfonts}
\usepackage{graphicx}
\newtheorem{thm}{Theorem}[section]

\newtheorem{cor}[thm] {Corollary}
\newtheorem{lem} [thm]{Lemma}

\newtheorem{rmk}[thm] {Remark}
\newtheorem{defn}[thm]{Definition}
\parindent=0pt
\parskip= 4.5 pt
\lineskip=3pt \oddsidemargin=10mm \evensidemargin=10mm
\topmargin=35pt \headheight=12pt \footskip=30pt \textheight 8.1in
\textwidth=150mm \raggedbottom \pagestyle{myheadings} \hbadness = 10000 \tolerance = 10000

\numberwithin{equation}{section}

\def\ni{\noindent}
\voffset=-12mm
\mathsurround=2pt
\parindent=12pt
\parskip= 4.5 pt
\lineskip=3pt
\oddsidemargin=11mm
\evensidemargin=11mm
\topmargin=55pt
\headheight=12pt
\footskip=30pt
\textheight 8.1in
\textwidth=150mm
\raggedbottom
\pagestyle{myheadings}
\hbadness = 10000
\tolerance = 10000

\begin{document}
\label{'ubf'}
\setcounter{page}{1} 

\markboth {\hspace*{-9mm} \centerline{\footnotesize \sc
    On the Characteristic Polynomial of Skew Gain Graphs}
                 }
                { \centerline {\footnotesize \sc
             Shahul Hameed K Roshni T Roy Soorya P Germina K A  } \hspace*{-9mm}
               }
\begin{center}
{
       {\huge \textbf{On the Characteristic Polynomial of Skew Gain Graphs 
                               }
       }
\\

\medskip

Shahul Hameed K \footnote{\small  Department of
Mathematics, K M M Government\ Women's\ College, Kannur - 670004,\ Kerala,  \ India.  E-mail: shabrennen@gmail.com} 
Roshni T Roy \footnote{\small Department of Mathematics, Central University of Kerala, Kasaragod - 671316,\ Kerala,\ India.\ Email:roshnitroy@gmail.com}
Soorya P \footnote{\small  Department of Mathematics, Central University of Kerala, Kasaragod - 671316,\ Kerala,\ India.\ Email: sooryap2017@gmail.com}
Germina K A \footnote{\small  Department of Mathematics, Central University of Kerala, Kasaragod - 671316,\ Kerala,\ India.\ Email: srgerminaka@gmail.com}
}
\end{center}
\newcommand\NEPS{\operatorname{NEPS}}
\thispagestyle{empty}
\begin{abstract}
\ni Gain graphs are graphs where the edges are given some orientation and labeled with the elements (called gains) from a group so that gains are inverted when we reverse the direction of the edges. Generalizing the notion of gain graphs, skew gain graphs have the property that the gain of a reversed edge is the image of edge gain under an anti-involution. In this paper, we deal with the adjacency matrix of skew gain graphs with involutive automorphism on a field of characteristic zero and their charactersitic polynomials. Spectra of some particular skew gain graphs are also discussed. Meanwhile it is interesting to note that weighted graphs are particular cases of skew gain graphs.
\\
---------------------------------------------------------------------------------------------\\
\end{abstract}
\textbf{Key Words:} Graph, Adjacency matrix, Graph eigenvalues, Signed graphs, Gain graphs and Skew gain graphs.\\
\textbf{Mathematics Subject Classification (2010):} \ 05C22, 05C50, 05C76.
\section{Introduction and Basic Results}
In this paper, we provide a general expression for computing the coefficients of the characterstic polynomials of skew gain graphs, with involutive automorphism on $\mathbb{F}^\times$ where $F$ is a field of characteristic zero, which are the generalization of the same in the case of gain and signed graphs and discuss the spectra of some skew gain graphs. Before, we delve into the details of skew gain graphs, we require some defintions mainly that of an anti-involution. We denote a group by $\Gamma$ and when we use matrices the elements are taken from the multiplicative group $F^\times$ where $F$ is a field of characteristic zero. For details regarding graphs, signed graphs, gain graphs and skew gain graphs, the reader may refer to \cite{fh,fh1,j1,shkg,tz3}. All the underlying graphs in this article are simple. We call a function $f:\Gamma\rightarrow \Gamma$ to be an \emph{involution} if $f(f(x))=x$ for all $x\in \Gamma$.  A function $f:\Gamma\rightarrow \Gamma$ is called an \emph{anti-homomorphism} if $f(xy)=f(y)f(x)$ for all $x,y\in\Gamma$. Note that for an abelian group an anti-homomorphism is always a homomorphism. An involution $f:\Gamma\rightarrow \Gamma$ which is an anti-homomorphism is called an \emph{anti-involution}. We use $\mathrm{Inv}(\Gamma)$ to denote the set of all anti-involutions on $\Gamma$. To make the discussion self contained, we provide the proofs of the results relating to involutions and anti-involutions. 
\begin{lem}\label{l1} Every involution is bijective.
\end{lem}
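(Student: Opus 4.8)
The plan is to observe that the defining identity $f(f(x)) = x$ for all $x\in\Gamma$ says precisely that $f$ is its own two-sided inverse, i.e. $f\circ f = \mathrm{id}_\Gamma$, and then to extract injectivity and surjectivity from this directly rather than invoking any abstract lemma about invertible maps.

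First I would prove injectivity: suppose $f(x) = f(y)$. Applying $f$ to both sides and using the involution property twice gives $x = f(f(x)) = f(f(y)) = y$. Next I would prove surjectivity: given an arbitrary $y\in\Gamma$, set $x = f(y)$; then $f(x) = f(f(y)) = y$, so $y$ lies in the image of $f$. Together these two steps show $f$ is a bijection (with $f^{-1} = f$), which is the claim.

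I do not expect any genuine obstacle here — the result is essentially immediate from the definition, and the only thing to be careful about is not to assume $\Gamma$ has group structure or that $f$ respects it, since Lemma~\ref{l1} is stated for an arbitrary involution of a set and the argument above uses nothing beyond the functional equation. I would keep the write-up to two or three sentences.
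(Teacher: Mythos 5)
Your proof is correct and follows essentially the same route as the paper: injectivity by applying $f$ to both sides of $f(x)=f(y)$, and surjectivity by exhibiting $x=f(y)$ as a preimage of $y$. Your remark that the argument uses only the functional equation and no group structure is accurate, though the paper does not dwell on this point.
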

\begin{proof} Let $f:\Gamma\rightarrow \Gamma$ be an involution. $f$ is injective, since $f(x)=f(y)\Longrightarrow f(f(x))=f(f(y))\Longrightarrow x=y$.
$f$ is surjective, since given $y\in\Gamma$ taking $x=f(y)\in\Gamma$, $f(x)=f(f(y))=y$. Hence $f$ is bijective.
\end{proof}
\begin{lem}\label{l2} Every anti-homomorphism $f:\Gamma\rightarrow \Gamma$ satisfies the following:\\
(i) $f(1)=1$ \\ (ii) $f(x^{-1})=(f(x))^{-1}$.
\end{lem}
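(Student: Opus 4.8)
The plan is to imitate the standard proof for ordinary homomorphisms, the only subtlety being the order reversal built into an anti-homomorphism.

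For part (i), I would start from the trivial identity $1 = 1\cdot 1$ in $\Gamma$ and apply $f$. Since $f$ is an anti-homomorphism, $f(1) = f(1\cdot 1) = f(1)f(1)$, so $f(1)$ is an idempotent element of $\Gamma$. Because every element of a group is left- (and right-) cancellable, multiplying this equation by $f(1)^{-1}$ gives $f(1) = 1$.

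For part (ii), fix $x\in\Gamma$ and use the two identities $x^{-1}x = 1$ and $x x^{-1} = 1$. Applying $f$, using the defining property $f(ab) = f(b)f(a)$, and invoking part (i), I get $f(x)f(x^{-1}) = f(x^{-1}x) = f(1) = 1$ and, symmetrically, $f(x^{-1})f(x) = f(x x^{-1}) = f(1) = 1$. Thus $f(x^{-1})$ is a two-sided inverse of $f(x)$ in $\Gamma$, and by uniqueness of inverses in a group, $f(x^{-1}) = (f(x))^{-1}$.

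There is essentially no obstacle here; the only thing to keep in mind is that $f$ reverses products, so in part (ii) the factors $f(x)$ and $f(x^{-1})$ occur in the opposite order from the homomorphism case, which is exactly what makes the cancellation work out. Note that Lemma \ref{l1} is not needed for this argument, although one could alternatively package part (i) as the observation that a group has no nontrivial idempotents.
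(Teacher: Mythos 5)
Your proof is correct and follows essentially the same route as the paper: applying $f$ to $1=1\cdot 1$ and cancelling for part (i), then applying $f$ to $x^{-1}x=1=xx^{-1}$ and using uniqueness of inverses for part (ii). Your write-up just makes the cancellation and two-sided-inverse steps more explicit than the paper does.
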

\begin{proof} (i) Since $1=1.1$, $f(1)=f(1).f(1)\Longrightarrow f(1)=1.$ \\
(ii) $x.\ x^{-1}=1=x^{-1}.\ x\Longrightarrow f(x^{-1}).\ f(x)=f(1)=1=f(x).\ f(x^{-1})$ which completes the proof.
\end{proof}
\begin{lem}\label{l3} $f:\Gamma\rightarrow \Gamma$ is an anti-involution if and only if there exists an involution $g:\Gamma\rightarrow \Gamma$ which is an automorphism such that $f(x)=g(x^{-1})$ for all $x\in \Gamma$.
\end{lem}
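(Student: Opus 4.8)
The plan is to prove both implications by making explicit the single correspondence $g(x)=f(x^{-1})$, equivalently $f(x)=g(x^{-1})$, and then checking in each direction that the relevant properties transfer across it. Throughout I would rely only on Lemma \ref{l1} (every involution is bijective) and Lemma \ref{l2}(ii) (an anti-homomorphism commutes with inversion), so the whole argument reduces to a handful of short group-element manipulations.

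For the ``if'' direction, assume $g:\Gamma\to\Gamma$ is an involutive automorphism and set $f(x)=g(x^{-1})$. First I would check that $f$ is an anti-homomorphism: $f(xy)=g\big((xy)^{-1}\big)=g(y^{-1}x^{-1})=g(y^{-1})g(x^{-1})=f(y)f(x)$, where the third equality is just that $g$ is a homomorphism. Then I would check that $f$ is an involution: $f(f(x))=f\big(g(x^{-1})\big)=g\big((g(x^{-1}))^{-1}\big)=g\big(g((x^{-1})^{-1})\big)=g(g(x))=x$, the middle step again using that $g$ is a homomorphism and the last using $g\circ g=\mathrm{id}_\Gamma$. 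Hence $f\in\mathrm{Inv}(\Gamma)$.

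For the ``only if'' direction, assume $f$ is an anti-involution and define $g(x)=f(x^{-1})$. The identity $f(x)=g(x^{-1})$ is immediate since $(x^{-1})^{-1}=x$. To see $g$ is a homomorphism: $g(xy)=f\big((xy)^{-1}\big)=f(y^{-1}x^{-1})=f(x^{-1})f(y^{-1})=g(x)g(y)$, using that $f$ reverses products. To see $g$ is an involution: $g(g(x))=g\big(f(x^{-1})\big)=f\big((f(x^{-1}))^{-1}\big)=f\big(f(x)\big)=x$, where the third equality uses Lemma \ref{l2}(ii) to move the inverse inside $f$ and the last uses $f\circ f=\mathrm{id}_\Gamma$. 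Being an involution, $g$ is bijective by Lemma \ref{l1}, and being additionally a homomorphism it is an automorphism, which finishes the proof.

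I expect no genuine obstacle here; the only thing to watch is the bookkeeping of which map reverses the order of a product (the anti-homomorphism) versus which map merely commutes with inversion, so as not to conflate ``anti-involution'' with ``the inversion map'' itself. The construction is visibly symmetric — the assignment $f\mapsto g$ is its own inverse — so once one direction is written cleanly, the other is essentially a transcription with the roles of $f$ and $g$ swapped.
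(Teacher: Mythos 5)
Your proof is correct and takes essentially the same route as the paper's: the same correspondence $g(x)=f(x^{-1})$, and the same computations verifying that $g$ is a homomorphism and an involution. The only differences are cosmetic --- you write out the converse in full where the paper dismisses it as immediate from the definition of $g$, and you obtain bijectivity of $g$ by citing Lemma~\ref{l1} after establishing the involution property, whereas the paper checks injectivity and surjectivity of $g$ directly.
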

\begin{proof} Let $f$ be an anti-involution. Define $g:\Gamma\rightarrow \Gamma$ such that $g(x)=f(x^{-1})$ for all $x\in \Gamma$. Then,
\begin{align*} g(xy)&=f((xy)^{-1})=f(y^{-1} x^{-1})\\
                    &=f(x^{-1})f(y^{-1})\\
                    &=g(x)g(y)
\end{align*}  which shows that $g$ is a homomorphism. $g$ is injective, since $f$ is and inverse of an element in $\Gamma$ is unique. To show that $g$ is surjective, take $y\in\Gamma$. Then $f$ being surjective, there exists $x\in\Gamma$ such that $f(x)=y$. Then $g(x^{-1})=f(x)=y$. Also $g$ is an involution since
\begin{align*} g(g(x))&=g(f(x^{-1}))\\
                      &=g(f(x)^{-1})\\
                      &=f(f(x))\\
                      &=x.
\end{align*}
Converse follows easily from the definition of $g$.
\end{proof}
\begin{lem}Let $\Gamma$ be an abelian group. If $f\in \mathrm{Inv}(\Gamma)$, then $g:\Gamma \rightarrow \Gamma$ defined by $g(x)=xf(x)$ is a homomorphism. 
\end{lem}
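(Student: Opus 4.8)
The plan is to verify the homomorphism identity $g(xy) = g(x)g(y)$ by a direct computation, exploiting two facts noted earlier: since $\Gamma$ is abelian, the anti-homomorphism $f$ is in fact a homomorphism, so $f(xy) = f(x)f(y)$; and every pair of elements of $\Gamma$ commutes, so factors may be freely reordered.

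First I would expand $g(xy) = (xy)\,f(xy)$ and substitute $f(xy) = f(x)f(y)$ to obtain $g(xy) = x\,y\,f(x)\,f(y)$. Separately I would expand $g(x)\,g(y) = \bigl(x\,f(x)\bigr)\bigl(y\,f(y)\bigr) = x\,f(x)\,y\,f(y)$, and then use commutativity of $\Gamma$ to interchange the middle factors $f(x)$ and $y$, which rewrites this as $x\,y\,f(x)\,f(y)$. Comparing the two expressions gives $g(xy) = g(x)\,g(y)$, so $g$ is a homomorphism.

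There is essentially no obstacle here; the only points worth care are that the argument genuinely invokes abelianness twice — once to promote the anti-homomorphism $f$ to an honest homomorphism, and once to permute the four factors — and that the involutive property of $f$ plays no role in this particular statement: it suffices that $f$ be an anti-homomorphism. One could add a remark to that effect. As a consistency check one may also note, using Lemma \ref{l2}(i), that $g(1) = 1\cdot f(1) = 1$, as a homomorphism must satisfy.
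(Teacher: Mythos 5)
Your proof is correct and follows essentially the same route as the paper's: both expand $g(xy)=xy\,f(xy)$, use that $f$ is a homomorphism on the abelian group $\Gamma$ to write $f(xy)=f(x)f(y)$, and then commute factors to recognize $g(x)g(y)$. Your version merely makes explicit the two uses of abelianness that the paper leaves implicit, and your added observations (that involutivity is not needed, and that $g(1)=1$) are accurate.
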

\begin{proof}
\begin{align*} g(xy)&=xyf(xy)\\
                      &=xf(x)yf(y)\\
                      &=g(x)g(y)
\end{align*} 
\end{proof}
\ni Now it is time to define what are skew gain graphs. Though plenty of literature can be cited dealing with the structures like graphs, signed graphs and gain graphs the detials of which are beautifully collected by Zaslavsky in ~\cite{tz3}, we could trace out only the works of J. Hage and T. Harju ~\cite{j1,j2} who defined the skew gain graphs. Our attempt to analyse the structure using matrices is the first of its kind in that direction. From now onwards, the notation $\overrightarrow{E}$ stands for the collection of oriented edges such that for an edge $uv\in E$ of a graph, we have the oriented edges $\overrightarrow{uv}$ and $\overrightarrow{vu}$ in $\overrightarrow{E}$.  
\begin{defn}[\cite{j1}]\rm{ Let $G=(V,\overrightarrow{E})$ be a graph with some prescribed orientation for the edges and $\Gamma$ be an arbitrary group. If $f\in \mathrm{Inv}(\Gamma)$ then the \emph{skew-gain graph} $\Phi_f=(G,\Gamma,\varphi,f)$ is such that the \emph{skew gain function} $\varphi:\overrightarrow{E}\rightarrow \Gamma$ satisfies $\varphi(\overrightarrow{vu})=f(\varphi(\overrightarrow{uv}))$.}
\end{defn}
\ni To quote some examples of skew gain graphs, note that every graph is a skew gain graph where the group $\Gamma$ is chosen as the multiplicative group $\{1\}$ and the function $f$ as the identity funtion. Signed graphs and gain graphs are particular cases of skew gain graphs by suitable choices of the groups and involution.
Another exciting idea is that weighted graphs are skew gain graphs with weights chosen from a group and the function $f$ is the identity function.
\ni The \emph{skew gain}, $\varphi(C)$, of a cycle $C:v_0v_1 \dots v_nv_0,$ is the product $\varphi(v_0v_1)\varphi(v_1v_2)\dots\varphi(v_nv_0)$ of the skew gains of its edges. Also, when the underlying graph is a path $P_n$ or a cycle $C_n,$ we call the corresponding structures to be \emph{skew gain path} or  \emph{skew gain cycle}, respectively.\\

\section{Adjacency matrix and Characteristic polynomial of skew gain graphs}
Let $F$ be a field of characteristic zero. We define the function $g:F^\times\rightarrow F^\times$ by $g(x)=xf(x)$ where $f\in \mathrm{Inv}(F^\times).$ In the case of gain graphs this $g$ ceases to be the trivial homomorphism.
Given a skew gain graph $\Phi_f=(G,F^\times,\varphi,f)$ its adjacency matrix $A(\Phi_f)=(a_{ij})_n$ is defined as the square matrix of order $n=|V(G)|$ where \\
$a_{ij} =
\left\{
\begin{array}{ll}
\varphi(v_iv_j)  & \mbox{if } v_i\sim v_j \\
0 & \mbox{otherwise }
\end{array}
\right.$ \\ such that whenever $a_{ij}\neq 0$,  
$a_{ji}=f(a_{ij})$. We denote the charactersitic polynomial of the skew gain graph $\Phi_f$ by $\Psi(\Phi_f,x)= \det(xI-A(\Phi_f))$.  We define, as usual, a subgraph of a graph as an \emph{elementary subgraph}\  \cite{fh1}, if its components consist only $K_2$ or cycles. In the following formulae, we take sum over all elementary subgraphs $L\in\mathfrak{L}_{i}$ where $\mathfrak{L}_{i}$ denotes the collection of all elementary subgraphs $L$ of order $i$. For $i=0,1$, we take $a_i(\Phi_f)=1, 0$ respectively in order to avoid confusion. Also the notation $K(L)$ is used to denote the number of components in $L$.
\begin{thm}\label{gen} If $\Phi_f =(G,F^\times,\varphi,f)$ is a skew gain graph where $G=(V,E)$ is a graph of order $n$, and if $\Psi(\Phi_f,x)=\displaystyle\sum_{i=0}^{n}a_i(\Phi_f)x^{n-i}$ then
\begin{equation}\label{eq1} a_i(\Phi_f)=\displaystyle \sum_{L\in\mathfrak{L}_{i}}(-1)^{K(L)}\Big(\displaystyle\prod_{K_2\in L}\displaystyle\prod_{\overrightarrow{e}\in K_2}g(\varphi(\overrightarrow{e}))\Big)\displaystyle\prod_{C\in L}(\varphi(C)+f(\varphi(C)))
\end{equation}
\end{thm}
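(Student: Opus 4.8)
The plan is to mimic the proof of the classical Sachs coefficient theorem for graphs, together with its signed- and gain-graph refinements, carrying the skew gains through the Leibniz expansion of the determinant. First I would write $\Psi(\Phi_f,x)=\det(xI-A(\Phi_f))=\sum_{\sigma\in S_n}\operatorname{sgn}(\sigma)\prod_{j=1}^{n}(xI-A(\Phi_f))_{j,\sigma(j)}$. Since $A(\Phi_f)$ has zero diagonal, the term indexed by a permutation $\sigma$ whose fixed-point set $S$ has size $n-i$ equals $\operatorname{sgn}(\sigma)\,x^{n-i}\prod_{j\notin S}(-a_{j\sigma(j)})$, and it is nonzero only when the restriction of $\sigma$ to $V\setminus S$ is a product of cycles of length at least $2$, all of whose arcs $\overrightarrow{v_jv_{\sigma(j)}}$ lie in $\overrightarrow{E}$. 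Forgetting orientations, the union of these cycles is precisely an elementary subgraph of order $i$, i.e., an element of $\mathfrak{L}_i$; hence the coefficient of $x^{n-i}$ is naturally a sum over $\mathfrak{L}_i$.

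After factoring the global sign $(-1)^i$ out of $\prod_{j\notin S}(-a_{j\sigma(j)})$, the remaining product $\prod_{j\notin S}a_{j\sigma(j)}$ splits over the cycles of $\sigma|_{V\setminus S}$, so the core of the argument is the dictionary between these permutations and elementary subgraphs together with the evaluation of the pieces. A $K_2$-component $v_iv_j$ of $L$ corresponds to the single transposition $(v_i\,v_j)$, which contributes $a_{ij}a_{ji}=\varphi(\overrightarrow{v_iv_j})\,f(\varphi(\overrightarrow{v_iv_j}))=g(\varphi(\overrightarrow{v_iv_j}))$; this value does not depend on which way the edge is oriented, since $g(f(x))=f(x)f(f(x))=f(x)x=g(x)$ in the abelian group $F^\times$. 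A cycle-component $C$ of length $k\ge 3$ corresponds to exactly the two directed $k$-cycles obtained by traversing $C$ in its two senses: the arcs of one give the product $\varphi(C)$, and the arcs of the other give the product of the $f$-images of those arc gains, which collapses to $f(\varphi(C))$ because $f$ is an anti-homomorphism (and $F^\times$ is abelian). Hence the two permutations carrying a given $C$ jointly produce the factor $\varphi(C)+f(\varphi(C))$. Consequently a fixed $L\in\mathfrak{L}_i$ with $K(L)$ components, $c$ of them cycles, is the image of exactly $2^{c}$ permutations, all having the same number $K(L)$ of nontrivial cycles, and the sum of their $\prod a_{j\sigma(j)}$-values equals $\bigl(\prod_{K_2\in L}g(\varphi(\overrightarrow{e}))\bigr)\prod_{C\in L}\bigl(\varphi(C)+f(\varphi(C))\bigr)$.

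It remains to settle the sign. A permutation whose nontrivial part consists of $K(L)$ cycles covering $i$ points has $\operatorname{sgn}(\sigma)=(-1)^{i-K(L)}$, hence $\operatorname{sgn}(\sigma)\cdot(-1)^i=(-1)^{2i-K(L)}=(-1)^{K(L)}$, the same for all $2^{c}$ permutations lying over $L$. Multiplying and summing over $L\in\mathfrak{L}_i$ gives exactly \eqref{eq1}; the stipulated values $a_0(\Phi_f)=1$ and $a_1(\Phi_f)=0$ agree with the formula, since $\mathfrak{L}_0$ consists only of the empty subgraph (empty product, sign $(-1)^0=1$) while $\mathfrak{L}_1=\emptyset$.

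The one place real care is needed is the middle step: that each cycle-component of $L$ is covered by exactly two permutation-cycles and each $K_2$ by exactly one, that all permutations over a common $L$ share the cycle count $K(L)$ (so that both $(-1)^{K(L)}$ and the multiplicity $2^{c}$ are well defined), and --- the one point where the hypothesis $f\in\mathrm{Inv}(F^\times)$ is genuinely used --- that reversing a cycle sends $\varphi(C)$ to $f(\varphi(C))$ while the $K_2$-factor $g(\varphi(\overrightarrow{e}))$ stays orientation-free. Once that dictionary is pinned down, the sign bookkeeping and the final substitution are purely formal; specializing to all gains equal to $1$ with $f$ the identity, so that $g\equiv 1$ and $\varphi(C)+f(\varphi(C))=2$, recovers the classical coefficient theorem as a consistency check.
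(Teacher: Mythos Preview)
Your proof is correct and follows essentially the same Sachs--type approach as the paper: expand $\det(xI-A(\Phi_f))$ via the Leibniz permutation sum, identify the nonzero terms with elementary subgraphs $L\in\mathfrak{L}_i$, and read off the $K_2$- and cycle-contributions together with the sign $(-1)^{K(L)}$. The paper's own proof is much terser---it invokes the principal-minor description of the coefficients and then jumps straight to the formula---so your version in fact supplies the details (the $2^{c}$ permutations over a fixed $L$, the identity $a_{ij}a_{ji}=g(\varphi(\overrightarrow{v_iv_j}))$, the use of the anti-involution to get $\varphi(C)+f(\varphi(C))$, and the sign computation $\operatorname{sgn}(\sigma)\cdot(-1)^i=(-1)^{K(L)}$) that the paper leaves implicit.
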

\begin{proof}
As we deal with only simple graphs $G,$ using the usual Laplacian expansion of the determinant, the coefficients $ a_i(\Phi_f)$ are given by $(-1)^i$ times the principal minors of order $i$. Hence $ a_i(\Phi_f)=\displaystyle \sum_{L\in\mathfrak{L}_{i}}(-1)^{K(L)}\displaystyle\prod_{\overrightarrow{e}\in E(L)}\varphi(\overrightarrow{e})$. But by the definition of elementary subgraph $L$, it has only two types of components namely $K_2$ or cycles. Thus, $\displaystyle\prod_{\overrightarrow{e}\in E(L)}\varphi(\overrightarrow{e})$ simplifises to $\Big(\displaystyle\prod_{K_2\in L}\displaystyle\prod_{\overrightarrow{e}\in K_2}g(\varphi(\overrightarrow{e}))\Big)\displaystyle\prod_{C\in L}(\varphi(C)+f(\varphi(C)))$
\end{proof}
\ni It is of particular interest to note that the charactersitic polynomials of skew gain paths and skew gain cycles can be expressed in terms of the matching sets as in the following theorems. In what follows, the notation $\mathcal{M}_k(G)$ denotes the set of all matchings $M$ of $G$ having exacly $k$ independent edges. Proofs of both the following results are omitted because they follow easily from Equation~\eqref{eq1} and the fact that the elementary subgraphs here  will form the concerned matching sets in the case of paths and for the cycle, apart from that, the largest ordered elementary subgraphs will include the underlying cycle also.
\begin{cor}\label{path} If $\Phi_f(P_n)=(P_n,F^\times,\varphi,f)$ is a  skew gain path, then the characteristic polynomial  $\Psi(\Phi_f(P_n),x)=x^n+\displaystyle\sum_{k=1}^{\lfloor\frac{n}{2}\rfloor}(-1)^{k}a_{2k } x^{n-2k}$ has the coefficients given by 
	\begin{equation*}
	a_{2k }= \displaystyle\sum_{M\in \mathcal{M}_k(G)}\prod_{e\in M}g(\varphi(e)) 
	\end{equation*}
\end{cor}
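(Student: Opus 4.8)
The plan is to specialize Theorem~\ref{gen} to the underlying graph $G=P_n$ and to exploit the single structural fact that distinguishes a path from a general graph: it is acyclic. So the whole argument is just a careful reading of Equation~\eqref{eq1} in this degenerate case, together with the standard observation that the elementary subgraphs of a forest are exactly its matchings.

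Concretely, I would proceed as follows. First, since $P_n$ contains no cycle, every component of an elementary subgraph $L$ of $P_n$ is a copy of $K_2$; in particular the factor $\prod_{C\in L}(\varphi(C)+f(\varphi(C)))$ in Equation~\eqref{eq1} is an empty product equal to $1$, and $L$ necessarily has even order. Writing this order as $i=2k$, an elementary subgraph $L\in\mathfrak{L}_{2k}$ is precisely a set of $k$ pairwise non-adjacent edges of $P_n$, i.e. an element of $\mathcal{M}_k(P_n)$, and conversely; moreover for such an $L$ the number of components is $K(L)=k$, so $(-1)^{K(L)}=(-1)^k$. Hence Equation~\eqref{eq1} gives $a_i(\Phi_f)=0$ for odd $i$ and $a_{2k}(\Phi_f)=(-1)^k\sum_{M\in\mathcal{M}_k(P_n)}\prod_{e\in M}g(\varphi(e))$, which, after pulling the sign $(-1)^k$ into the statement's $(-1)^k x^{n-2k}$ and using the stated conventions $a_0(\Phi_f)=1$, $a_1(\Phi_f)=0$, is exactly the claimed expansion of $\Psi(\Phi_f(P_n),x)$.

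One point deserves to be spelled out for rigor: the factor attached to a $K_2$-component with underlying (undirected) edge $e=uv$ must be a single well-defined quantity $g(\varphi(e))$, independent of the chosen orientation. This holds because $g(\varphi(\overrightarrow{vu}))=\varphi(\overrightarrow{vu})f(\varphi(\overrightarrow{vu}))=f(\varphi(\overrightarrow{uv}))\,f(f(\varphi(\overrightarrow{uv})))=f(\varphi(\overrightarrow{uv}))\,\varphi(\overrightarrow{uv})=\varphi(\overrightarrow{uv})f(\varphi(\overrightarrow{uv}))=g(\varphi(\overrightarrow{uv}))$, using that $f$ is an involution and that $F^\times$ is abelian; so the inner product $\prod_{\overrightarrow{e}\in K_2}g(\varphi(\overrightarrow{e}))$ appearing in Equation~\eqref{eq1} contributes a single factor $g(\varphi(e))$ per $K_2$-component, matching the product $\prod_{e\in M}g(\varphi(e))$ in the statement.

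I do not expect a real obstacle here — the content is entirely the bookkeeping of the previous paragraph, namely confirming that $\mathfrak{L}_{2k}(P_n)=\mathcal{M}_k(P_n)$ and that the $g$-factors are orientation-free, which is precisely the simplification flagged in the remark preceding the statement. As an alternative route one could instead induct on $n$, expanding $\det(xI-A(\Phi_f(P_n)))$ along the last vertex to obtain the recurrence $\Psi(\Phi_f(P_n),x)=x\,\Psi(\Phi_f(P_{n-1}),x)-g(\varphi(e_{n-1}))\,\Psi(\Phi_f(P_{n-2}),x)$, where $e_{n-1}$ is the last edge of $P_n$, and then verify that the proposed coefficients satisfy the same recurrence together with the initial data for $P_1$ and $P_2$; but the elementary-subgraph argument above is shorter and is the one the paper indicates.
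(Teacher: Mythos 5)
Your argument is correct and is exactly the route the paper intends: the paper omits the proof of Corollary~\ref{path}, stating only that it follows from Equation~\eqref{eq1} because the elementary subgraphs of a path are precisely its matchings, and your write-up supplies that bookkeeping (acyclicity kills the cycle factor, $\mathfrak{L}_{2k}(P_n)=\mathcal{M}_k(P_n)$ with $K(L)=k$, and odd-order coefficients vanish). Your extra check that $g(\varphi(\overrightarrow{uv}))=g(\varphi(\overrightarrow{vu}))$, so that each $K_2$-component contributes a single orientation-independent factor $g(\varphi(e))$, is a worthwhile detail the paper leaves implicit.
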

\begin{cor}\label{cycle} If $\Phi_f(C_n)=(C_n,F^\times,\varphi,f)$ is a  skew gain cycle, then the characteristic polynomial $\Psi(\Phi_f(C_n),x)=x^n+\displaystyle\sum_{k=1}^{\lfloor\frac{n}{2}\rfloor}(-1)^{k}a_{2k }x^{n-2k}-(\varphi(C)+f(\varphi(C)))$ with the coefficients $a_{2k }$ given by
	\begin{equation*}
	a_{2k }= \displaystyle\sum_{M\in \mathcal{M}_k(G)}\prod_{e\in M}g(\varphi(e)) 
	\end{equation*}
\end{cor}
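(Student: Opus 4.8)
The plan is to derive Corollary~\ref{cycle} directly from the general coefficient formula~\eqref{eq1} in Theorem~\ref{gen}, exactly as the authors indicate, by analyzing the possible elementary subgraphs of the cycle $C_n$. First I would recall that an elementary subgraph $L$ of $C_n$ has components that are each either a $K_2$ or a cycle; since $C_n$ contains exactly one cycle (namely itself) and no chords, the only cycle that can appear as a component of $L$ is $C_n$ in its entirety. Thus the elementary subgraphs split into two families: (a) those with no cycle component, which are precisely the matchings of $C_n$ viewed as sets of independent edges, so an elementary subgraph of order $2k$ with $k$ copies of $K_2$ corresponds bijectively to a matching $M\in\mathcal{M}_k(C_n)$ with $K(L)=k$; and (b) the single elementary subgraph of order $n$ consisting of the whole cycle $C_n$, for which $K(L)=1$.

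Next I would substitute these two families into~\eqref{eq1}. For family (a), the product over cycle components is empty (equal to $1$), so the contribution to $a_{2k}(\Phi_f)$ is $(-1)^k\sum_{M\in\mathcal{M}_k(C_n)}\prod_{e\in M}\prod_{\overrightarrow{e}\in K_2}g(\varphi(\overrightarrow{e}))$. Here I would note that for a single edge $e=v_iv_j$, the inner double product $\prod_{\overrightarrow{e}\in K_2}g(\varphi(\overrightarrow{e}))$ should be interpreted (consistently with Corollary~\ref{path}) as the single factor $g(\varphi(e))=\varphi(v_iv_j)f(\varphi(v_iv_j))=\varphi(v_iv_j)\varphi(v_jv_i)=a_{ij}a_{ji}$, which is exactly the diagonal entry of $A(\Phi_f)^2$ coming from that edge and is well-defined independent of orientation. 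This yields the stated coefficient $a_{2k}=\sum_{M\in\mathcal{M}_k(C_n)}\prod_{e\in M}g(\varphi(e))$ multiplied by $(-1)^k$, matching the sign pattern in the claimed polynomial. For family (b), the product over $K_2$ components is empty and the single cycle contributes the factor $\varphi(C)+f(\varphi(C))$, with $K(L)=1$ giving sign $(-1)^1=-1$; hence the order-$n$ term contributes $-(\varphi(C)+f(\varphi(C)))x^{n-n}=-(\varphi(C)+f(\varphi(C)))$, which is precisely the constant term $a_n(\Phi_f)x^{0}$ displayed in the statement.

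Finally I would assemble the pieces: the $x^n$ term comes from the empty elementary subgraph ($i=0$, $a_0=1$), the odd-index coefficients vanish because $C_n$ has no elementary subgraph of odd order (a cycle needs order $\ge 3$ but the only cycle is of order $n$, and if $n$ is odd it still only contributes at index $n$; any subgraph made purely of $K_2$'s has even order), and the even-index coefficients for $2k<n$ are given by family (a) alone, while at index $n$ (when $n$ is even) families (a) and (b) both contribute and one must add the matching term $(-1)^{n/2}a_n$ coming from a perfect matching together with the separate cycle term $-(\varphi(C)+f(\varphi(C)))$. I would present the polynomial in the grouped form stated, treating the $-(\varphi(C)+f(\varphi(C)))$ summand as written out separately from the matching-generated sum. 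The only genuinely delicate point — and the one I would be careful to spell out — is the bookkeeping at the top degree when $n$ is even, where both a perfect matching and the whole cycle are elementary subgraphs of order $n$; everything else is a routine translation of~\eqref{eq1} using that $C_n$ has a unique cycle subgraph and that matchings are exactly the cycle-free elementary subgraphs.
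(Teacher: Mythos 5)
Your proposal is correct and follows the same route the paper intends: the authors explicitly omit the proof, stating that it follows from Equation~\eqref{eq1} once one observes that the elementary subgraphs of $C_n$ are exactly the matchings together with the whole cycle, which is precisely the case analysis you carry out. Your extra care about the interpretation of the per-edge product as the single factor $g(\varphi(e))=a_{ij}a_{ji}$ and about the degree-$n$ bookkeeping when $n$ is even (where a perfect matching and the full cycle both contribute) fills in the details the paper leaves implicit, and is consistent with Corollary~\ref{detcycle}.
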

\begin{cor}\label{detpath}
	
	$\det(A(\Phi_f(P_n)))$\begin{equation*}
= \left\{
\begin{array}{rl} 0 & \text{, if } n\equiv 1 \pmod {2},\\
(-1)^{\frac{n}{2}}\displaystyle\sum_{M\in \mathcal{M}_{\frac{n}{2}}(G)}\prod_{e\in M}g(\varphi(e)),  & \text{ if } n\equiv 0 \pmod {2}
\end{array} \right.
	\end{equation*}
\end{cor}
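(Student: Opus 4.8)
The plan is to derive Corollary~\ref{detpath} directly as the special case $i=n$ of Corollary~\ref{path}, exploiting the fact that for a path $P_n$ every elementary subgraph is a disjoint union of copies of $K_2$ (a path has no cycles), so the only elementary subgraphs of maximal order $n$ are perfect matchings of $P_n$. First I would recall that $\det(A(\Phi_f(P_n)))$ is, up to sign, the constant term of the characteristic polynomial: since $\Psi(\Phi_f(P_n),x)=\det(xI-A(\Phi_f(P_n)))$, setting $x=0$ gives $\Psi(\Phi_f(P_n),0)=\det(-A(\Phi_f(P_n)))=(-1)^n\det(A(\Phi_f(P_n)))$. Reading off the constant term from the expression in Corollary~\ref{path}, the coefficient of $x^0$ is nonzero only when $n=2k$ for some integer $k$, i.e. when $n$ is even.

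Next I would split into the two parity cases. If $n$ is odd, $P_n$ has an odd number of vertices and hence admits no perfect matching; equivalently there is no term $x^{n-2k}$ with $n-2k=0$ in Corollary~\ref{path}, so the constant term of $\Psi(\Phi_f(P_n),x)$ is $0$, giving $\det(A(\Phi_f(P_n)))=0$. If $n$ is even, write $n=2m$; the constant term of $\Psi(\Phi_f(P_n),x)$ is the $k=m=n/2$ term, namely $(-1)^{n/2}\sum_{M\in\mathcal{M}_{n/2}(G)}\prod_{e\in M}g(\varphi(e))$, where $\mathcal{M}_{n/2}(P_n)$ consists exactly of the perfect matchings. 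Combining with $\Psi(\Phi_f(P_n),0)=(-1)^n\det(A(\Phi_f(P_n)))=\det(A(\Phi_f(P_n)))$ (as $n$ is even, $(-1)^n=1$) yields
\[
\det(A(\Phi_f(P_n)))=(-1)^{n/2}\sum_{M\in\mathcal{M}_{n/2}(G)}\prod_{e\in M}g(\varphi(e)),
\]
which is the claimed formula.

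There is essentially no serious obstacle here; the only point requiring a moment's care is the sign bookkeeping, namely tracking the factor $(-1)^n$ relating $\Psi(\Phi_f(P_n),0)$ to $\det(A(\Phi_f(P_n)))$ and checking it is harmless in the even case, together with the observation that $K(M)=n/2$ for a perfect matching so the sign $(-1)^{K(L)}$ in Equation~\eqref{eq1} already matches the $(-1)^{n/2}$ absorbed into the statement of Corollary~\ref{path}. One could alternatively bypass the characteristic polynomial entirely and apply Equation~\eqref{eq1} with $i=n$ directly, noting $\mathfrak{L}_n$ for a path is precisely the set of perfect matchings and each such $L$ contributes $(-1)^{n/2}\prod_{e\in L}g(\varphi(e))$; both routes are equally short.
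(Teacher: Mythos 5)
Your argument is correct and is essentially the paper's own proof: both derive the result from Corollary~\ref{path} by identifying $\det(A(\Phi_f(P_n)))$ with $(-1)^n$ times the constant term of the characteristic polynomial and then reading off that constant term (zero for odd $n$, the $k=n/2$ matching sum for even $n$). The sign bookkeeping you carry out, including the harmless factor $(-1)^n=1$ in the even case, matches the paper's intended computation.
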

\begin{proof}
	Note that the determinant of a matrix is $(-1)^n$ times the constant term in the characteristic polynomial. Then the result follows easily from Corollary~\ref{path}.
\end{proof}
\begin{cor}\label{detcycle}
	
	$\det(A(\Phi_f(C_n)))$\begin{equation*}
	= \left\{
	\begin{array}{rl} \varphi(C)+f(\varphi(C)) & \text{, if } n\equiv 1 \pmod {2},\\
	(-1)^{\frac{n}{2}}\displaystyle\sum_{M\in \mathcal{M}_{\frac{n}{2}}(G)}\prod_{e\in M}g(\varphi(e))-(\varphi(C)+f(\varphi(C))),  & \text{ if } n\equiv 0\pmod{2}	\end{array} \right.
	\end{equation*}
\end{cor}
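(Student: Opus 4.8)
The plan is to read off the determinant directly from Corollary~\ref{cycle}, exactly as Corollary~\ref{detpath} does for Corollary~\ref{path}. First I would recall that $\det(A(\Phi_f(C_n)))=(-1)^n\,\Psi(\Phi_f(C_n),0)$, so the determinant is $(-1)^n$ times the constant term of the characteristic polynomial. From the expression $\Psi(\Phi_f(C_n),x)=x^n+\sum_{k=1}^{\lfloor n/2\rfloor}(-1)^k a_{2k}x^{n-2k}-(\varphi(C)+f(\varphi(C)))$ the constant term is obtained by picking out the term with $x^0$: the $x^n$ term contributes nothing for $n\ge 1$, the $-(\varphi(C)+f(\varphi(C)))$ is already constant, and among the middle terms the exponent $n-2k$ vanishes only when $n$ is even and $k=n/2$, giving the contribution $(-1)^{n/2}a_n=(-1)^{n/2}\sum_{M\in\mathcal{M}_{n/2}(G)}\prod_{e\in M}g(\varphi(e))$.

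Next I would split into the two parity cases. If $n$ is odd, there is no $k$ with $2k=n$, so the constant term is simply $-(\varphi(C)+f(\varphi(C)))$, and multiplying by $(-1)^n=-1$ yields $\det(A(\Phi_f(C_n)))=\varphi(C)+f(\varphi(C))$. If $n$ is even, the constant term is $(-1)^{n/2}\sum_{M\in\mathcal{M}_{n/2}(G)}\prod_{e\in M}g(\varphi(e))-(\varphi(C)+f(\varphi(C)))$, and since $(-1)^n=1$ in this case the determinant equals that same quantity. This matches the claimed formula.

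There is essentially no obstacle here; the only thing to be mildly careful about is the sign bookkeeping, namely that $(-1)^n$ is $-1$ in the odd case (which flips the sign of the lone constant term $-(\varphi(C)+f(\varphi(C)))$) and $+1$ in the even case (which leaves both surviving terms untouched), together with the convention stated before Theorem~\ref{gen} that $a_0=1$ and $a_1=0$ so that no spurious constant terms appear. One could equally well note that for even $n$ a matching with $n/2$ edges is a perfect matching of $C_n$, of which there are exactly two, but this observation is not needed for the statement as written. I would therefore present the proof as a two-line computation, as above.
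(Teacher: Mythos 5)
Your proof is correct and follows exactly the route the paper intends: it invokes the identity $\det(A(\Phi_f(C_n)))=(-1)^n\Psi(\Phi_f(C_n),0)$ from the proof of Corollary~\ref{detpath} and reads the constant term off Corollary~\ref{cycle}, splitting by the parity of $n$. You have merely written out the sign bookkeeping that the paper leaves implicit, so nothing further is needed.
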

\begin{proof}
The result follows from Corollary~\ref{cycle} with the facts mentioned above.
\end{proof}
\begin{rmk}\rm{
An important observation from Corollary~\ref{detcycle} is that 	$\det(A(\Phi_f(C_n)))$ is never zero for an odd cycle as no multiplicative anti-involution satisfies $f(x)=-x$.}
\end{rmk}
\ni Indeed Corollary~\ref{path} is applicable to any skew gain graphs with underlying graphs as trees. In the case of a skew gain graph with a bipartite graph as the underlying graph, we note generally that some coefficients will be zero as shown below. i.e., for an odd ordered underlying graph, there will be only terms having odd powers of $x$ and for even ordered case only even powers of $x$. 
\begin{thm}\label{tree}
	If $\Phi_f(G)=(G,F^\times,\varphi,f)$ is skew gain graph, where $G$ is a bipartite graph,  then the characteristic polynomial is   $\Psi(\Phi_f(G),x)=x^n+\displaystyle\sum_{k=1}^{\lfloor\frac{n}{2}\rfloor}a_{2k} x^{n-2k}$ where the coefficients are given by \\
	\begin{equation*}
	a_{2k}=\displaystyle \sum_{L\in\mathfrak{L}_{2k}}(-1)^{K(L)}\Big(\displaystyle\prod_{K_2\in L}\displaystyle\prod_{\overrightarrow{e}\in K_2}g(\varphi(\overrightarrow{e}))\Big)\displaystyle\prod_{C\in L}(\varphi(C)+f(\varphi(C)))
	\end{equation*}
\end{thm}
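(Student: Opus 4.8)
The plan is to deduce Theorem~\ref{tree} directly from the general coefficient formula in Theorem~\ref{gen}, so the only real content is a parity argument showing that $a_i(\Phi_f)=0$ whenever $i$ is odd. First I would recall that by Theorem~\ref{gen},
\begin{equation*}
a_i(\Phi_f)=\displaystyle\sum_{L\in\mathfrak{L}_{i}}(-1)^{K(L)}\Big(\displaystyle\prod_{K_2\in L}\displaystyle\prod_{\overrightarrow{e}\in K_2}g(\varphi(\overrightarrow{e}))\Big)\displaystyle\prod_{C\in L}(\varphi(C)+f(\varphi(C))),
\end{equation*}
so it suffices to observe that when $G$ is bipartite there are \emph{no} elementary subgraphs of odd order. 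An elementary subgraph $L$ is a disjoint union of copies of $K_2$ and of cycles; each $K_2$ contributes $2$ to $|V(L)|$, and every cycle in a bipartite graph is even, hence contributes an even number of vertices. Therefore $|V(L)|$ is always even, the set $\mathfrak{L}_{2k+1}$ is empty, and the corresponding sum is vacuous, giving $a_{2k+1}(\Phi_f)=0$ for every $k$. Consequently the characteristic polynomial collapses to $\Psi(\Phi_f(G),x)=x^n+\sum_{k=1}^{\lfloor n/2\rfloor}a_{2k}x^{n-2k}$, and the stated formula for $a_{2k}$ is just the specialization of Equation~\eqref{eq1} to even index $i=2k$.

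The remaining point to address is that the formula as written in the theorem omits the $(-1)^{k}$-type sign that appeared in Corollary~\ref{path} and Corollary~\ref{cycle}; I would note that here the sign $(-1)^{K(L)}$ is retained \emph{inside} the sum rather than being pulled out, because for a general bipartite $G$ the elementary subgraphs of a given even order need not all have the same number of components (a subgraph of order $2k$ could be $k$ disjoint edges, or fewer components if it contains cycles), so no uniform global sign can be factored out. Thus the expression in the statement is literally Equation~\eqref{eq1} with $i$ replaced by $2k$, and no further manipulation is needed.

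The main (indeed only) obstacle is the elementary combinatorial fact that bipartite graphs have no odd cycles; everything else is bookkeeping. I would also remark, for completeness, that the same argument specializes correctly to the earlier corollaries: for a tree (or more generally a forest) the only elementary subgraphs are matchings, so the cycle product is empty, $K(L)=k$ for every $L\in\mathfrak{L}_{2k}$, and $(-1)^{K(L)}=(-1)^k$ can then be extracted to recover exactly the form in Corollary~\ref{path}. Hence Theorem~\ref{tree} is the natural common generalization, and its proof is complete once the parity observation is made.
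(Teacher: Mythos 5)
Your proof is correct and follows essentially the same route as the paper: both reduce the theorem to Theorem~\ref{gen} and observe that an odd-order elementary subgraph would have to contain an odd cycle, which a bipartite graph lacks, so $\mathfrak{L}_{i}$ is empty for odd $i$ and the odd coefficients vanish. Your additional remarks about the sign $(-1)^{K(L)}$ and the specialization to forests are accurate but not needed.
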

\begin{proof}
The coefficient of $x^{n-i}$ depends on the collection $\mathfrak{L}_{i}$ of elementary subgraphs of order $i$. When $i$ is odd, components of $L\in\mathfrak{L}_{i}$ must contain a cycle of odd order. Otherwise, if for some $L\in\mathfrak{L}_{i}$, all its components are $K_2$ or cycles of even order implies $L$ has even order, a contradiction. Now since $G$ is bipartite, it has no odd cycles. Hence the coefficients corresponding to $x^{n-i}$ become $0$ when $i$ is odd.	
\end{proof}
\ni Also Corollary~\ref{cycle} can be extended to the case of skew gain graphs with unicyclic graphs as the underlying graphs as follows.
\begin{thm}
	If $\Phi_f(U_n)=(U_n,F^\times,\varphi,f)$ is a skew gain unicyclic graph with unique cycle $C_p$ then the characteristic polynomial  $\Psi(\Phi_f(U_n),x)= x^n+\displaystyle\sum_{i=1}^{\lfloor\frac{n}{2}\rfloor}a_{2i} x^{n-2i}+\displaystyle\sum_{i=0}^{\lfloor\frac{n-p}{2}\rfloor}b_{(p+2i)} x^{n-(p+2i)}$ with the coefficients $a_{2i}$ are given by \\
	
	$ a_{2i}=  \displaystyle\sum_{M\in \mathcal{M}_{i}(U_n)}(-1)^{i}\prod_{e\in M}g(\varphi(e))$ and $b_{p+2i}$ are given by\\
	
	$b_{p+2i}=  [\varphi(C_p)+f(\varphi(C_p))]\displaystyle\sum_{M\in \mathcal{M}_{i}(U_n-C_p)}(-1)^{i+1}\prod_{e\in M}g(\varphi(e))$
	
\end{thm}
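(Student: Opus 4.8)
The plan is to apply the general coefficient formula of Theorem~\ref{gen} to the underlying graph $U_n$ and then sort the elementary subgraphs into two classes according to whether they use the unique cycle $C_p$ of $U_n$ or not. Since $U_n$ is unicyclic, the only cycle contained in $U_n$ is $C_p$ itself, so every elementary subgraph $L$ of $U_n$ is of one of two mutually exclusive types: either all components of $L$ are copies of $K_2$, or exactly one component of $L$ is $C_p$ and all remaining components are copies of $K_2$. This dichotomy is what forces the characteristic polynomial to split into the two sums displayed in the statement.

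First I would handle the elementary subgraphs with no cycle. Such an $L$ of order $2i$ is precisely a matching $M\in\mathcal{M}_i(U_n)$, it has $K(L)=i$ components, and by Theorem~\ref{gen} its contribution to the coefficient of $x^{n-2i}$ is $(-1)^i\prod_{e\in M}g(\varphi(e))$; the product over cycles in \eqref{eq1} is empty and contributes the factor $1$. Summing over all such $M$ gives the stated formula for $a_{2i}$. Next I would handle the elementary subgraphs that contain $C_p$. Removing the cycle from such an $L$ leaves a matching $M$ of $U_n-C_p$ with, say, $i$ edges, so $L$ has order $p+2i$ and $K(L)=i+1$ components (the $i$ copies of $K_2$ together with the single cycle $C_p$). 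By \eqref{eq1} its contribution to the coefficient of $x^{n-(p+2i)}$ is $(-1)^{i+1}\big(\varphi(C_p)+f(\varphi(C_p))\big)\prod_{e\in M}g(\varphi(e))$. Factoring out $\varphi(C_p)+f(\varphi(C_p))$ and summing over all $M\in\mathcal{M}_i(U_n-C_p)$ yields the stated formula for $b_{p+2i}$, and the index $i$ ranges from $0$ (the matching with no edges, giving the pure cycle $C_p$) up to $\lfloor\frac{n-p}{2}\rfloor$ since $U_n-C_p$ has $n-p$ vertices.

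Finally I would assemble the pieces: the coefficient of $x^{n-j}$ in $\Psi(\Phi_f(U_n),x)$ is $a_j$ when $j=2i$ is even and no cycle is involved, plus $b_j$ when $j=p+2i$ for some $i\ge 0$; when $p$ is even these two families of exponents can overlap, but since the two types of elementary subgraphs are disjoint the contributions simply add, which is exactly how the polynomial is written in the statement. I do not expect a genuine obstacle here: the only point requiring a little care is the bookkeeping of $K(L)$ — remembering that the cycle $C_p$ counts as one component so that an elementary subgraph with $i$ edges plus the cycle has $i+1$ components, which is what produces the sign $(-1)^{i+1}$ rather than $(-1)^i$ in the formula for $b_{p+2i}$. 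Everything else is a direct specialization of Theorem~\ref{gen} together with the structural fact that a unicyclic graph has exactly one cycle.
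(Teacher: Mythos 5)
Your proposal is correct and follows essentially the same route as the paper: both specialize Theorem~\ref{gen} to $U_n$ and split the elementary subgraphs into those avoiding the unique cycle $C_p$ (pure matchings, giving the $a_{2i}$) and those containing $C_p$ as a component (the cycle plus a matching of $U_n-C_p$, giving the $b_{p+2i}$ with the sign $(-1)^{i+1}$ from $K(L)=i+1$). Your explicit remark about the possible overlap of exponents when $p$ is even is a point the paper passes over silently, but it does not change the argument.
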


\begin{proof}
	
	As the coefficient of $x^{n-i}$ depends on the collection $\mathfrak{L}_{i}$ of elementary subgraphs of order $i,$ letting $C_p$ to be the unique cycle in $U_n,$ all the elementary subgraphs of order less than $p$ contain only $K_2$ as their components. Also, the coefficient of $x^{n-i}$, for $i \geq p$ depend on the presence or absence of $C_p$. \\
	Let $\mathfrak{L}_{i}(C_p)$ denote the collection of elementary subgraphs of order $i$ containing $C_p$ as a component. Then by Theorem ~\ref{gen}, the coefficient of $x^{n-i}$ that does not depend on $C_p$ will reduces to,		
	\begin{equation*}a_i
	= \left\{
	\begin{array}{rl} 	
	\displaystyle\sum_{M\in \mathcal{M}_{i}(G)} (-1)^{i}\prod_{e\in M}g(\varphi(e)),  & \text{ if $i$ is even } \\ 
	0,  & \text{ if $i$ is odd} 
	\end{array} \right.
	\end{equation*}
	The coefficients that depend on $\mathfrak{L}_{k}(C_p)$ are those of  $x^{n-k},$ where $k\geq p+2i, i = 0,1,\dots ,{\lfloor\frac{n-p}{2}\rfloor}.$ Thus, the coefficient of $x^{n-(p+2i)}$  will become, 
	\begin{equation*}	 
	b_{p+2i}=  [\varphi(C_p)+f(\varphi(C_p))]\displaystyle\sum_{M\in \mathcal{M}_{i}(U_n-C_p)}(-1)^{i+1}\prod_{e\in M}g(\varphi(e))
	\end{equation*}
\end{proof}
\section{ Spectra of some skew gain graphs}
The eigenvalues of the adjacency matrices, counting the multiplicities, of a skew gain graph are called the eigenvalues or spectra of that skew gain graph. If $\lambda_1,\lambda_2,\cdots, \lambda_{k}$ are the eigenvalues with algebraic multiplicities $\alpha_1,\alpha_2,\cdots, \alpha_k$, we express the same with the notation $\begin{pmatrix} \lambda_1 &\lambda_2 &\cdots & \lambda_k\\
\alpha_1&\alpha_2&\cdots& \alpha_k\end{pmatrix}$. Basic details regarding spectra of graphs can be had from~\cite{spec1}. As we are dealing with matrices over arbitary fields, the characteristic roots are taken from its algebraic closure.  	
Let us begin the discussion by giving spectra of skew gain graphs with a star $K_{1,n}$ as the underlying graph. The following Corollary follows from Theorem~\ref{tree} which is used to give the spectra of the aforesaid skew gain graph in Theorem~\ref{star}. 
\begin{cor}
	If $\Phi_f(G)=(G,F^\times,\varphi,f)$ is a skew gain graph where $G= K_{1,n}$ is a star graph then the characteristic polynomial is 
	\begin{equation*} 
	\Psi(\Phi_f(G),x)=x^{n+1} -\Big( \displaystyle\sum_{e \in E(G)} g(\varphi(e))\Big )x^{n-1}.
	\end{equation*}
\end{cor}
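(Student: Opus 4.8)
The plan is to apply Theorem~\ref{tree} to $G = K_{1,n}$, which is bipartite (indeed a tree) on $n+1$ vertices, and then to pin down exactly which elementary subgraphs contribute. Since $K_{1,n}$ contains no cycles, every elementary subgraph $L$ of $G$ has all of its components equal to $K_2$, so the cycle factor $\prod_{C\in L}(\varphi(C)+f(\varphi(C)))$ appearing in \eqref{eq1} is empty, and the coefficient formula of Theorem~\ref{tree} collapses to a signed sum of products of the edge weights $g(\varphi(e))$.

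The combinatorial heart of the matter is that in $K_{1,n}$ every edge is incident with the central vertex, so no two edges are independent; hence the only elementary subgraphs of $G$ are the single edges. Thus $\mathfrak{L}_2 = E(G)$, each such $L$ having a single component so that $(-1)^{K(L)} = -1$, while $\mathfrak{L}_i = \emptyset$ for every $i \geq 3$. Substituting this into the formula of Theorem~\ref{tree} gives $a_2 = -\sum_{e\in E(G)} g(\varphi(e))$ and $a_{2k} = 0$ for all $k \geq 2$, so that $\Psi(\Phi_f(G),x) = x^{n+1} + a_2 x^{n-1}$, which is precisely the asserted identity.

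The only subtlety worth a line of justification is that the weight $g(\varphi(e))$ attached to a $K_2$-component is well defined independently of the orientation chosen for $e$: since $F^\times$ is abelian and $f$ is an anti-involution, $g(f(a)) = f(a)f(f(a)) = f(a)a = a f(a) = g(a)$. Beyond this, no real obstacle is expected; the entire content is the observation that a star admits no elementary subgraph other than a lone edge, after which the statement drops out of Theorem~\ref{tree}.
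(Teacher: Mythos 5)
Your proof is correct and follows exactly the route the paper intends: the paper offers no written proof, merely asserting that the corollary ``follows from Theorem~\ref{tree},'' and your argument supplies precisely the missing details (a star has no cycles and no two independent edges, so the only elementary subgraphs are single edges, giving $a_2=-\sum_{e}g(\varphi(e))$ and $a_{2k}=0$ for $k\ge 2$). Your observation that $g(f(a))=g(a)$, so the $K_2$-weight is orientation-independent, is a worthwhile extra sentence that the paper leaves implicit.
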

\begin{thm}\label{star}
	If $\Phi_f(G)=(G,F^\times,\varphi,f)$ is a skew gain graph where $G= K_{1,n}$ is a star of order $n+1$, then the spectrum of $\Phi_f(G)$ is $\begin{pmatrix}  -\sqrt{{\displaystyle\sum_{e \in E(G)}g(\varphi(e))}} & \sqrt{{\displaystyle\sum_{e \in E(G)}g(\varphi(e))}}  & 0\\
	1 & 1 & n-1
	\end{pmatrix}$
\end{thm}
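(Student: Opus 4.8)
The plan is to read the spectrum straight off the characteristic polynomial furnished by the Corollary immediately preceding this theorem. Abbreviate $S=\sum_{e\in E(G)}g(\varphi(e))$, an element of $F$. That Corollary gives $\Psi(\Phi_f(G),x)=x^{n+1}-Sx^{n-1}$, so the first step is simply to factor this as $x^{n-1}(x^{2}-S)$.

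Next I would pass to the algebraic closure of $F$, where the characteristic roots live by convention, and fix a square root $\sqrt{S}$ of $S$; then $x^{2}-S=(x-\sqrt{S})(x+\sqrt{S})$, and hence $\Psi(\Phi_f(G),x)=x^{n-1}(x-\sqrt{S})(x+\sqrt{S})$. Counting roots with multiplicity, the eigenvalues are $0$ with multiplicity $n-1$ together with $\sqrt{S}$ and $-\sqrt{S}$, each with multiplicity one, which is precisely the asserted spectrum matrix.

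I do not expect a genuine obstacle; the only point to watch is the degenerate case $S=0$, where $\sqrt{S}=-\sqrt{S}=0$ and the polynomial collapses to $x^{n+1}$, so that $0$ is in fact an eigenvalue of multiplicity $n+1$. In displaying the spectrum in the stated matrix form one then adopts the usual convention that coincident eigenvalues have their multiplicities amalgamated; it is perhaps worth remarking that, unlike the positively weighted real case where $S>0$, the vanishing of $S$ can genuinely occur over a general field of characteristic zero. Apart from this bit of bookkeeping, the conclusion is immediate from the displayed factorization.
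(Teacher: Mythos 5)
Your proposal is correct and is exactly the argument the paper intends: the theorem is stated as an immediate consequence of the preceding Corollary, and reading the spectrum off the factorization $x^{n-1}(x^2-S)$ over the algebraic closure is the whole proof. Your remark about the degenerate case $S=0$ is a sensible extra observation the paper does not make, but it does not change the approach.
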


\ni Next theorem gives the spectra of skew gain graphs with a double star graph as the underlying graph.
\begin{thm}
	If $\Phi_f(G)=(G,F^\times,\varphi,f)$ is a skew gain graph where $G$ is a double  star graph of order $n$ then the characteristic polynomial  $\Psi(\Phi_f(G),x)=x^n-  a_2x^{n-2}+ a_4x^{n-4}$ where $a_2=  \displaystyle\sum_{e \in E(G)} g(\varphi(e))$ and $a_4 = \displaystyle\sum_{M\in \mathcal{M}_2(G)}\prod_{e\in M}g(\varphi(e)).$ Hence the spectrum of $\Phi_f(G)$ is\\ $\begin{pmatrix}  -\sqrt{\frac{a_2 - \sqrt{{a_2}^2 - 4a_4}}{2}} & -\sqrt{\frac{a_2 + \sqrt{{a_2}^2 - 4a_4}}{2}} & \sqrt{\frac{a_2 - \sqrt{{a_2}^2 - 4a_4}}{2}} & \sqrt{\frac{a_2 + \sqrt{{a_2}^2 - 4a_4}}{2}} & 0\\
	1 & 1 & 1 & 1 & n-4
	\end{pmatrix}$. 
\end{thm}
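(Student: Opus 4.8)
The plan is to obtain the characteristic polynomial from Theorem~\ref{tree} and then read off the spectrum by an elementary factorization. Recall that a double star $G$ on $n$ vertices consists of two adjacent central vertices, say $u$ and $v$, together with some pendant vertices attached to $u$ and some attached to $v$. In particular $G$ is a tree, hence bipartite, so Theorem~\ref{tree} applies and only even powers of $x$ occur in $\Psi(\Phi_f(G),x)$.

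First I would analyse the elementary subgraphs of $G$. Since $G$ is a tree it has no cycles, so every elementary subgraph $L$ is a disjoint union of copies of $K_2$, i.e.\ a matching, and $K(L)$ equals the number of edges of $L$. The key combinatorial observation is that every matching of $G$ has at most two edges: if the central edge $uv$ is chosen then no further edge is available, and if $uv$ is not chosen then at most one pendant edge at $u$ and one pendant edge at $v$ may be chosen. Hence $\mathcal{M}_k(G)=\emptyset$ for all $k\ge 3$, and Theorem~\ref{tree} collapses to
\[
\Psi(\Phi_f(G),x)=x^n-\Big(\sum_{e\in E(G)}g(\varphi(e))\Big)x^{n-2}+\Big(\sum_{M\in\mathcal{M}_2(G)}\prod_{e\in M}g(\varphi(e))\Big)x^{n-4},
\]
where the signs $-$ and $+$ are exactly $(-1)^{K(L)}$ for $K(L)=1$ and $K(L)=2$. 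This is precisely $x^n-a_2x^{n-2}+a_4x^{n-4}$ with $a_2$ and $a_4$ as stated.

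For the spectrum I would factor $\Psi(\Phi_f(G),x)=x^{n-4}\bigl(x^4-a_2x^2+a_4\bigr)$, so that $0$ is an eigenvalue of multiplicity $n-4$, and then substitute $y=x^2$: the nonzero eigenvalues are the square roots of the two roots $y=\tfrac{a_2\pm\sqrt{a_2^2-4a_4}}{2}$ of the quadratic $y^2-a_2y+a_4$. This produces the four values $\pm\sqrt{\tfrac{a_2\pm\sqrt{a_2^2-4a_4}}{2}}$, which together with the $(n-4)$-fold eigenvalue $0$ is the asserted spectrum.

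The only genuinely delicate point is the matching count in the second step: one must be certain that no matching of a double star can exceed two edges, which is where the particular structure of the graph (a single edge joining the two hubs) is used. A minor additional remark is warranted if one wants the listed multiplicities to be exact rather than merely the multiset of roots of $x^4-a_2x^2+a_4$ counted with multiplicity: in the generic situation $a_4\neq 0$ and $a_2^2\neq 4a_4$ the four nonzero roots are pairwise distinct, while in degenerate cases the displayed multiset still records the roots of the characteristic polynomial correctly once coincidences are accounted for.
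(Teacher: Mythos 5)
Your proof is correct and follows essentially the same route as the paper: apply Theorem~\ref{tree}, observe the polynomial terminates at $x^{n-4}$, factor out $x^{n-4}$, and solve the resulting biquadratic. You actually supply a detail the paper leaves implicit — the argument that every matching of a double star has at most two edges, which is why $a_{2k}=0$ for $k\ge 3$ — and your closing remark about possible coincidences among the four nonzero roots is a fair caveat the paper does not address.
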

\begin{proof}
	The expression for the characteristic polynomial directly follows from Theorem \ref{tree}. Thus the characteristic polynomial is 
	\begin{eqnarray*}
	\Psi(\Phi_f(G),x)=x^n-  a_2x^{n-2}+ a_4x^{n-4}\\
	= x^{n-4}\big(x^4-  a_2x^2+ a_4\big)
	\end{eqnarray*} 
	where $a_2=  \displaystyle\sum_{e \in E(G)} g(\varphi(e))$ and $a_4 = \displaystyle\sum_{M\in \mathcal{M}_2(G)}\prod_{e\in M}g(\varphi(e)).$ 
	Hence the eigenvalues are $\pm\sqrt{\frac{a_2 \pm \sqrt{{a_2}^2 - 4a_4}}{2}}$ with multipilicity one each and $0$ with multiplicity $n-4.$ 
\end{proof}
	
\ni To deal with the case of skew gain graphs with underlying graphs as a complete bipartite graph $K{m,n} \ ,$ first we give certain well known results from matrix theory.
\begin{lem}[\cite{fz}]\label{schur}\ If $M=\begin{pmatrix} A & B\\
	                                             C & D
	                             \end{pmatrix}$ and if $ C$  and $D$ commute, then $\det(M)=\det(AD-BC).$                

\end{lem}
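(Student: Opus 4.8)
The final statement to prove is Lemma \ref{schur}: if $M=\begin{pmatrix} A & B\\ C & D\end{pmatrix}$ and $C$ and $D$ commute, then $\det(M)=\det(AD-BC)$. This is a classical block-determinant identity (often attributed to Schur). Let me sketch how I would prove it.

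The plan is to handle the invertible case first by a block factorization, then remove the invertibility hypothesis by a density/continuity argument.

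\textbf{Step 1: The case where $D$ is invertible.}
First I would use the standard Schur-complement factorization. Assuming $D$ is invertible, write
\begin{equation*}
\begin{pmatrix} A & B\\ C & D\end{pmatrix}
=\begin{pmatrix} A-BD^{-1}C & B\\ 0 & D\end{pmatrix}
\begin{pmatrix} I & 0\\ D^{-1}C & I\end{pmatrix},
\end{equation*}
so that $\det(M)=\det(A-BD^{-1}C)\det(D)=\det\big((A-BD^{-1}C)D\big)=\det(AD-BD^{-1}CD)$. Now I invoke the hypothesis that $C$ and $D$ commute: $CD=DC$ gives $D^{-1}CD=C$, hence $BD^{-1}CD=BC$, and therefore $\det(M)=\det(AD-BC)$.

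\textbf{Step 2: Removing the invertibility hypothesis.}
The identity $\det(M)=\det(AD-BC)$ is a polynomial identity in the entries of $A,B,C,D$, valid on the (Zariski-dense, or in the characteristic-zero/$\mathbb{R}$ or $\mathbb{C}$ setting, topologically dense) set where $D$ is invertible, subject to the algebraic constraint $CD=DC$. So I would replace $D$ by $D+tI$ for an indeterminate (or small scalar) $t$; then $D+tI$ commutes with $C$ as well, and $D+tI$ is invertible for all but finitely many values of $t$. Applying Step 1 gives $\det\!\begin{pmatrix} A & B\\ C & D+tI\end{pmatrix}=\det\big(A(D+tI)-BC\big)$ for all such $t$. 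Both sides are polynomials in $t$ agreeing at infinitely many values, hence identically equal; setting $t=0$ yields the claim.

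\textbf{Main obstacle.}
The block factorization in Step 1 is routine; the only genuine subtlety is Step 2, i.e. making sure the commutativity hypothesis survives the perturbation (it does, since $C$ commutes with $D+tI$ whenever it commutes with $D$) and that the perturbation argument is legitimate over the field in question. Since the paper works over a field of characteristic zero and ultimately passes to the algebraic closure, the polynomial-identity version of the argument (agreement of two polynomials in $t$ that coincide at infinitely many points) is clean and avoids any topological language. I expect no real difficulty beyond bookkeeping.
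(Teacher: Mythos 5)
Your proof is correct. Note that the paper itself gives no proof of this lemma --- it is quoted directly from Zhang's \emph{Matrix Theory} as a known fact --- so there is nothing to compare against; your argument (Schur-complement factorization when $D$ is invertible, using $CD=DC$ to turn $BD^{-1}CD$ into $BC$, followed by the polynomial perturbation $D\mapsto D+tI$ to drop the invertibility hypothesis) is the standard proof and is sound over the infinite fields the paper works with. The only point worth making explicit is the tacit hypothesis that $A,B,C,D$ are all square of the same size, which is needed for $AD-BC$ to be meaningful and is indeed how the lemma is applied later (to $K_{m,m}$, where the blocks are $m\times m$).
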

\begin{lem}[\cite{fz}] \label{schur2}(Schur's lemma)\ If $M=\begin{pmatrix} A & B\\
	C & D
	\end{pmatrix}$ and if $ A $ is invertible then $\det(M)=\det(A)\det(D-CA^{-1}B)$. Also if $D$ is invertible $\det(M)=\det(D)\det(A-BD^{-1}C).$                 
	
\end{lem}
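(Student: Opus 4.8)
The plan is to factor $M$ into block-triangular matrices with trivial (identity) diagonal blocks together with one block-diagonal matrix, and then use that the determinant is multiplicative and that a block-diagonal matrix has determinant equal to the product of the determinants of its diagonal blocks.

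First I would dispose of the case where $A$ is invertible via the identity
\[
M=\begin{pmatrix} I & 0\\ CA^{-1} & I\end{pmatrix}\begin{pmatrix} A & 0\\ 0 & D-CA^{-1}B\end{pmatrix}\begin{pmatrix} I & A^{-1}B\\ 0 & I\end{pmatrix},
\]
whose correctness is checked by a direct block multiplication: the three factors produce, in the four blocks, $A$, then $A(A^{-1}B)=B$, then $CA^{-1}A=C$, and finally $CA^{-1}B+(D-CA^{-1}B)=D$. The two unipotent triangular factors each have determinant $1$, and the middle factor has determinant $\det(A)\det(D-CA^{-1}B)$, so multiplicativity of $\det$ gives $\det(M)=\det(A)\det(D-CA^{-1}B)$.

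For the second formula I would use the mirror factorization
\[
M=\begin{pmatrix} I & BD^{-1}\\ 0 & I\end{pmatrix}\begin{pmatrix} A-BD^{-1}C & 0\\ 0 & D\end{pmatrix}\begin{pmatrix} I & 0\\ D^{-1}C & I\end{pmatrix},
\]
valid when $D$ is invertible, and conclude in exactly the same way that $\det(M)=\det(D)\det(A-BD^{-1}C)$. The only steps needing care are the block arithmetic above and the auxiliary fact that $\det\begin{pmatrix} A & 0\\ 0 & E\end{pmatrix}=\det(A)\det(E)$; the latter follows from Laplace expansion along the rows meeting $A$ (or from yet another triangular factorization). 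I do not expect a genuine obstacle here, since the statement is classical linear algebra; the one subtlety worth flagging is that the factorizations require the invertibility of $A$ (respectively $D$), which is precisely the hypothesis in each case.
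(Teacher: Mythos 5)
Your proof is correct: both block factorizations check out under direct multiplication, the unipotent triangular factors have determinant $1$, and the block-diagonal middle factor contributes $\det(A)\det(D-CA^{-1}B)$ (respectively $\det(D)\det(A-BD^{-1}C)$). The paper itself offers no proof of this lemma --- it is quoted as a known result from Zhang's \emph{Matrix Theory} --- so there is nothing to compare against; your block-LDU argument is the standard textbook derivation and is complete as written.
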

\begin{thm}\label{bpt}
	If  $\Phi_f(G)=(G,F^\times,\varphi,f)$ is a skew gain graph where $G$ is a complete bipartite graph $K_{m,n}$ with $m<n,$ then 0 is an eigenvalue of $\Phi_f(G)$  with multiplicity atleast $n-m.$
\end{thm}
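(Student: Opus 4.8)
The plan is to read the block structure of the adjacency matrix straight off the bipartition of $K_{m,n}$ and then bound its rank. Write $V(K_{m,n})=X\cup Y$ with $|X|=m$ and $|Y|=n$, and order the vertices so that those of $X$ come first. Since every vertex of $X$ is joined to every vertex of $Y$ and there are no other edges, the adjacency matrix has the form
\[
A(\Phi_f(G))=\begin{pmatrix} 0 & B\\ C & 0\end{pmatrix},
\]
where $B$ is the $m\times n$ matrix with $B_{ij}=\varphi(\overrightarrow{x_iy_j})$ (all entries nonzero) and $C$ is the $n\times m$ matrix with $C_{ji}=f(B_{ij})$.

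The main step is a rank count. Stacking the first $m$ rows, each of the shape $(0\mid B_{i,\cdot})$, on top of the last $n$ rows, each of the shape $(C_{j,\cdot}\mid 0)$, and using subadditivity of rank under row stacking,
\[
\operatorname{rank}A(\Phi_f(G))\le \operatorname{rank}(B)+\operatorname{rank}(C)\le m+m=2m,
\]
because $B$ has only $m$ rows and $C$ only $m$ columns. Hence the nullspace of $A(\Phi_f(G))$, which is exactly the eigenspace of the eigenvalue $0$, has dimension at least $(m+n)-2m=n-m$. Since the geometric multiplicity of an eigenvalue is a lower bound for its algebraic multiplicity, $0$ is an eigenvalue of $\Phi_f(G)$ with multiplicity at least $n-m$.

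In the spirit of the Schur identities just recalled, one can argue alternatively: for $x\neq 0$ the block $xI_m$ is invertible, so Lemma~\ref{schur2} gives
\[
\Psi(\Phi_f(G),x)=\det\begin{pmatrix} xI_m & -B\\ -C & xI_n\end{pmatrix}=x^{m}\det\!\Big(xI_n-\tfrac1x CB\Big)=x^{m-n}\det\!\big(x^{2}I_n-CB\big).
\]
Since $CB$ is a product of an $n\times m$ and an $m\times n$ matrix it has rank at most $m$, so its characteristic polynomial equals $t^{\,n-m}p(t)$ for a polynomial $p$ with $\deg p=m$; thus $\det(x^{2}I_n-CB)=x^{2(n-m)}p(x^{2})$ and $\Psi(\Phi_f(G),x)=x^{\,n-m}p(x^{2})$. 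Both sides are polynomials agreeing for all $x\neq 0$, hence identically, so $x^{\,n-m}\mid\Psi(\Phi_f(G),x)$. A purely combinatorial route via Theorem~\ref{gen} also works: every elementary subgraph of $K_{m,n}$, being a disjoint union of edges and even cycles, meets $X$ in exactly as many vertices as it meets $Y$ and so has order at most $2m$, whence $a_i(\Phi_f)=0$ for $i>2m$ and $x^{(m+n)-2m}=x^{n-m}$ divides $\Psi(\Phi_f(G),x)$.

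There is no serious obstacle here; the only point to phrase carefully is that the rank count yields the geometric multiplicity, so one must invoke geometric $\le$ algebraic (or, in the Schur version, justify the passage from an identity valid off $x=0$ to one valid everywhere). It is worth remarking that the skew-gain hypothesis $C_{ji}=f(B_{ij})$ plays no role: any block matrix of this shape with zero diagonal blocks has the stated property, and the argument in fact gives the sharper statement $\operatorname{rank}A(\Phi_f(G))\le 2m$.
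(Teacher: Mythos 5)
Your proposal is correct, and your primary argument is genuinely different from the paper's. The paper proves the theorem combinatorially from Theorem~\ref{gen}: every elementary subgraph of $K_{m,n}$, being a disjoint union of $K_2$'s and even cycles, meets the two parts in equally many vertices, so has order at most $2\min\{m,n\}=2m$; hence $a_i(\Phi_f)=0$ for $i>2m$ and the characteristic polynomial is divisible by $x^{n-m}$. That is exactly the third route you sketch in passing, and it has the advantage of reading the \emph{algebraic} multiplicity directly off the coefficient formula with no further lemma. Your main argument instead bounds $\operatorname{rank}A(\Phi_f(G))\le 2m$ from the block shape $\left(\begin{smallmatrix}0&B\\C&0\end{smallmatrix}\right)$ and passes from nullity (geometric multiplicity) to algebraic multiplicity; this is more elementary, makes no use of the elementary-subgraph machinery, and, as you note, yields the sharper rank statement while showing that the skew-gain relation $C=B^{\#}$ is irrelevant. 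Your Schur-complement variant is essentially the computation the paper performs later for $K_{m,n}$ in the last theorem of Section~3, so you have in effect unified that later argument with this one. All three routes are sound; the only care points you already flag correctly are the inequality between geometric and algebraic multiplicity and the extension of the identity from $x\neq 0$ to all $x$.
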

\begin{proof}
	Let $G$ be complete bipartite $K_{m,n}$  with bipartitions $V_1$ and $V_2.$ The coefficient of $x^{(n+m)-i}$ depends on the collection $\mathfrak{L}_{i}$ of elementary subgraphs of order $i$. For any $L\in\mathfrak{L}_{i},$ $\frac{i}{2}$ vertices in $L$ are from $V_1$ and $\frac{i}{2}$ are from $V_2$ since the components of $L$ are $K_2$ or even cycles in $G$. Thus the maximum value that $i$ can assume is $2\min\{m,n\} = 2m$. Then the least power of $x$ is $(m+n) - 2m = n-m  \neq 0$ and hence $0$ is an eigenvalue of $\Phi_f(G)$  with multiplicity atleast $n-m$.
\end{proof}

\ni Recall that matching number is the cardinality of maximum matching in $G$. Using this for a skew gain graph with tree as its underlying graph, we have the following.

\begin{thm}
	If  $\Phi_f(G)=(G,F^\times,\varphi,f)$ is a skew gain graph where $G$ is a tree of order $n$ having matching number $t < \dfrac{n}{2}$, then $0$ is an eigenvalue $\Phi_f(G)$ with multiplicity atleast $n-2t$.
\end{thm}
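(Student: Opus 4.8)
The plan is to read off the characteristic polynomial in its matching form and then use the matching number to locate the lowest power of $x$. Since $G$ is a tree it is in particular acyclic, so every elementary subgraph of $G$ (in the sense used before Theorem~\ref{gen}) has all of its components equal to $K_2$; equivalently, the elementary subgraphs of order $2k$ are precisely the $k$-matchings of $G$, and there is no elementary subgraph of odd order. Hence Theorem~\ref{tree} (equivalently, the tree version of Corollary~\ref{path} pointed out in the text) gives
\[
\Psi(\Phi_f(G),x) = x^n + \sum_{k=1}^{\lfloor n/2\rfloor} (-1)^k a_{2k}\, x^{\,n-2k},
\qquad
a_{2k} = \sum_{M\in\mathcal{M}_k(G)} \prod_{e\in M} g(\varphi(e)).
\]

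Next I would invoke the hypothesis on the matching number. Because $G$ has matching number $t$, it possesses no matching with more than $t$ independent edges, so $\mathcal{M}_k(G)=\varnothing$, and therefore $a_{2k}=0$, for every $k>t$. Consequently the sum above truncates at $k=t$, and
\[
\Psi(\Phi_f(G),x) = x^{\,n-2t}\Big( x^{2t} + \sum_{k=1}^{t} (-1)^k a_{2k}\, x^{\,2(t-k)} \Big),
\]
so that $x^{\,n-2t}$ divides $\Psi(\Phi_f(G),x)$. Since $t<\tfrac{n}{2}$ we have $n-2t\ge 1$, so $0$ is genuinely a root of the characteristic polynomial, with algebraic multiplicity at least $n-2t$, which is the assertion.

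There is no substantial obstacle here; the only point needing care is the first one, namely that for a tree the elementary subgraphs coincide with the matchings and none has odd order — this is exactly where acyclicity of $G$ is used, and it is what converts the general formula~\eqref{eq1} of Theorem~\ref{gen} into the clean matching expression. The multiplicity is stated as ``at least'' rather than ``exactly'' because the bracketed polynomial factor may itself vanish at $x=0$; this occurs precisely when $a_{2t}=\sum_{M\in\mathcal{M}_t(G)}\prod_{e\in M}g(\varphi(e))=0$, i.e.\ when cancellation among the gain products over maximum matchings happens, so equality cannot be claimed in general.
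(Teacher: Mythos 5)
Your proposal is correct and follows essentially the same route as the paper: for a tree the elementary subgraphs are exactly the matchings, the matching number $t$ forces every coefficient of $x^{n-i}$ with $i>2t$ to vanish, and hence $x^{n-2t}$ divides the characteristic polynomial. Your closing remark on why only ``at least'' can be claimed (possible cancellation in $a_{2t}$ over the gain products) is a worthwhile precision that the paper omits.
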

\begin{proof}
	The coefficient $x^{n-i}$ in the characteristic polynomial $G$ depends on matchings having exactly $\frac{i}{2}$ edges. Since $G$ have matching number $t$, the maximum value for $\frac{i}{2}$ is $t$ and hence maximum value of $i$ is $2t$. Then the least power of $x$ is $n-2t \neq 0$ and hence $0$ is an eigenvalue $\Phi_f(G)$ with multiplicity atleast $n-2t$. 
\end{proof}
\ni To continue the discussion, we define for a matrix $B=(a_{ij})\in M_{m\times n}(F), B^{f}=(b_{ij})\in M_{m\times n}(F)$ where $f\in \mathrm{Inv}(F^{\times})$ as follows.
$b_{ij} =
\left\{
\begin{array}{ll}
f(a_{ij})  & \mbox{if } a_{ij}\neq 0 \\
0 & \mbox{otherwise }
\end{array}
\right.$\\
Also for this matrix $B\in M_{m\times n},$ $B^{\#}\in M_{n\times m}$ is defined as $ B^{\#}=(B^{f})^T.$ Note that for the anti-involution $f_1\in \mathrm{Inv}({F}^\times)$ defined by $f_1(x)=x,$ the matrix $B^{\#}=B^T$ and in the case of the anti-involution $f_2\in \mathrm{Inv}(\mathbb{C}^\times)$ defined by $f_2(z)=\overline{z},$ $B^{\#}=B^*,$ the conjugate transpose of $B.$ 
\begin{thm}
	If the adjacency matrix of  $\Phi_f(K_{m,n})=(K_{m,n} \ ,\mathbb{F}^\times,\varphi,f)$, where $m\leq n$, is expressed as $A(\Phi_f(K_{m,n}))=\begin{pmatrix}
	O  &  B\\
	B^{\#}  & O 
	\end{pmatrix} $,
	then the non zero eigenvalues of $\Phi_f(K_{m,n})$ are $\lambda$ such that $\lambda^2$ is an eigenvalue of $BB^{\#}$.
	
\end{thm}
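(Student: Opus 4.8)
The plan is to use the block structure of $A = A(\Phi_f(K_{m,n}))$ together with a characteristic-polynomial computation via a Schur-complement argument, exactly as set up in Lemmas~\ref{schur} and~\ref{schur2}. Write $A = \begin{pmatrix} O & B \\ B^{\#} & O \end{pmatrix}$ with $B \in M_{m\times n}(F)$, so that $xI - A = \begin{pmatrix} xI_m & -B \\ -B^{\#} & xI_n \end{pmatrix}$. For $x \neq 0$ the block $xI_n$ is invertible, and the second part of Lemma~\ref{schur2} (Schur's lemma applied with the lower-right block) gives $\Psi(\Phi_f(K_{m,n}),x) = \det(xI_n)\det\big(xI_m - (-B)(xI_n)^{-1}(-B^{\#})\big) = x^n \det\big(xI_m - \tfrac{1}{x} BB^{\#}\big) = x^{n-m}\det(x^2 I_m - BB^{\#})$.

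From this identity the conclusion is immediate: a nonzero scalar $\lambda$ satisfies $\Psi(\Phi_f(K_{m,n}),\lambda)=0$ if and only if $\det(\lambda^2 I_m - BB^{\#}) = 0$, i.e. if and only if $\lambda^2$ is an eigenvalue of the $m\times m$ matrix $BB^{\#}$. (The factor $\lambda^{n-m}$ contributes only the eigenvalue $0$, consistent with Theorem~\ref{bpt}.) I would also remark that since $\Psi$ is a polynomial, the rational identity $\Psi(\Phi_f(K_{m,n}),x) = x^{n-m}\det(x^2 I_m - BB^{\#})$, proved for $x\neq 0$, extends to all $x$ by continuity/polynomial identity, so in fact the full characteristic polynomial factors this way; but only the statement about nonzero eigenvalues is needed here.

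The only genuinely delicate point is to make sure the hypotheses of Schur's lemma are correctly invoked: we need the inversion to be legitimate, which forces the case split on $x=0$ versus $x\neq 0$, and we must be careful that the off-diagonal block in the $(2,1)$ position really is $B^{\#}$ and not $B^f$ or $B^T$ — but this is exactly how the adjacency matrix was defined, since $a_{ji} = f(a_{ij})$ means the lower block is $(B^f)^T = B^{\#}$. No commutativity assumption is needed because we are using the Schur-complement form (Lemma~\ref{schur2}), not the determinant-of-$AD-BC$ form (Lemma~\ref{schur}); indeed $BB^{\#}$ need not even be symmetric in general. Thus there is no real obstacle, and the proof is essentially a two-line determinant manipulation once the block form is in hand.
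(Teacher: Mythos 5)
Your proof is correct and follows essentially the same route as the paper: apply Schur's Lemma~\ref{schur2} to $xI-A$ for $x\neq 0$ to obtain $\det(xI-A)=x^{n-m}\det(x^{2}I_{m}-BB^{\#})$, from which the claim about nonzero eigenvalues is immediate. The only difference is that the paper handles $m=n$ as a separate case via Lemma~\ref{schur} (commuting blocks), whereas your Schur-complement computation covers both cases uniformly --- a harmless simplification.
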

\begin{proof}
	The characteristic equation for the matrix 	
	$\begin{pmatrix}
	O  &  B\\
	B^{\#}  & O 
	\end{pmatrix}$ is \\
	\begin{center}
		$\det \begin{pmatrix}
		x I_{m \times m}  &  -B_{m \times n}\\
		-B^{\#}_{n \times m}  &  xI_{n \times n} 
		\end{pmatrix} = 0$
	\end{center}
	For $m=n$, $x I$ and $B^{\#}$ commutes and hence using Lemma~\ref{schur}, characteristic equation becomes	$\det (x^2I- BB^{\#})=0$. This implies the eigenvalues of $\Phi_f(K_{m,m})$ are $\lambda$ such that $\lambda^2$ is an eigenvalue of $BB^{\#}$.\\
	For $m<n$, by Theorem ~\ref{bpt}, $0$ is an eigenvalue with multiplicity atleast $n-m$.
	Now for $x \neq 0$, $xI_{n \times n}$ is invertible and hence using the Schur's Lemma~\ref{schur2},\\
	\begin{align*}
	\det \begin{pmatrix}
	x I_{m \times m}  &  -B_{m \times n}\\
	-B^{\#}_{n \times m}  &  xI_{n \times n} 
	\end{pmatrix}
	& = \det ( x I_{n \times n}) \det(x I_{m \times m}- B. \dfrac{1}{x}I_{n \times n}.B^{\#}) \\
	& = x^{n-m}.\det (x^2I_{m \times m}- BB^{\#}).
	\end{align*}
	This implies the non zero eigenvalues are $\lambda$ such that $\lambda^2$ is an eigenvalue of $BB^{\#} $.
	
\end{proof}
\section*{References}
\begin{enumerate}	
	
	
	\bibitem{spec1} Drago\v{s} M.\ Cvetkovi\'c, Michael Doob, and Horst Sachs, \textbf{Spectra of Graphs: Theory and Application}.  VEB Deutscher Verlag der Wissenschaften, Berlin, and Academic Press, New York, 1980.
	\bibitem{fh} F.\ Harary, \textbf{Graph Theory}.  Addison Wesley, Reading, Mass., 1972.
	\bibitem{fh1} F. Harary, The determinant of the adjacency matrix of a graph. SIAM Review, 4 (1982) 202--210.
	\bibitem{j1} J.Hage and T. Harju, T, The size of switching classes with skew gains. Discrete Math., 215 (2000), 81--92.
	\bibitem{j2} J. Hage, The membership problem for switching classes with skew gains. Fundamenta Informaticae, 39 (1999), 375--387.
	\bibitem{shkg} Shahul Hameed K and K. A. Germina, Balance in gain graphs--A spectral analysis. Linear Algebra and its Appl., 436 (2012), 1114--1121.
	\bibitem{tz3} T.\ Zaslavsky, A mathematical bibliography of signed and gain graphs and allied areas.  VII edition.  Electronic J.\ Combinatorics 8 (1998), Dynamic Surveys \#DS8, 124 pp.
	\bibitem{fz} F.\ Zhang, \textbf{Matrix Theory: Basic Theory and Techniques}.  Springer-Verlag, 1999.

\end{enumerate}
  
\end{document}